\documentclass[12pt]{article}

\usepackage[a4paper,left=0.82in,right=0.82in,top=0.95in,bottom=0.95in]{geometry}
\usepackage[T1]{fontenc}
\usepackage[utf8]{inputenc}
\usepackage{lmodern}
\usepackage{amsmath,amssymb,amsthm,mathtools}
\usepackage{booktabs}
\usepackage{graphicx}
\usepackage{xcolor}
\usepackage{hyperref}
\usepackage{microtype}
\usepackage{enumitem}
\usepackage{array}
\usepackage{setspace}

\hypersetup{
  colorlinks=true,
  linkcolor=blue!55!black,
  citecolor=blue!55!black,
  urlcolor=blue!55!black
}

\setlist[itemize]{topsep=0.25em,itemsep=0.12em}
\setlist[enumerate]{topsep=0.25em,itemsep=0.12em}

\theoremstyle{plain}
\newtheorem{theorem}{Theorem}
\newtheorem{proposition}[theorem]{Proposition}
\newtheorem{lemma}[theorem]{Lemma}

\theoremstyle{definition}
\newtheorem{definition}[theorem]{Definition}
\theoremstyle{remark}
\newtheorem{remark}[theorem]{Remark}

\newcommand{\Lop}{\mathcal L}
\newcommand{\floor}[1]{\left\lfloor #1\right\rfloor}
\newcommand{\ceil}[1]{\left\lceil #1\right\rceil}

\begin{document}

\title{%
\includegraphics[width=0.33\linewidth]{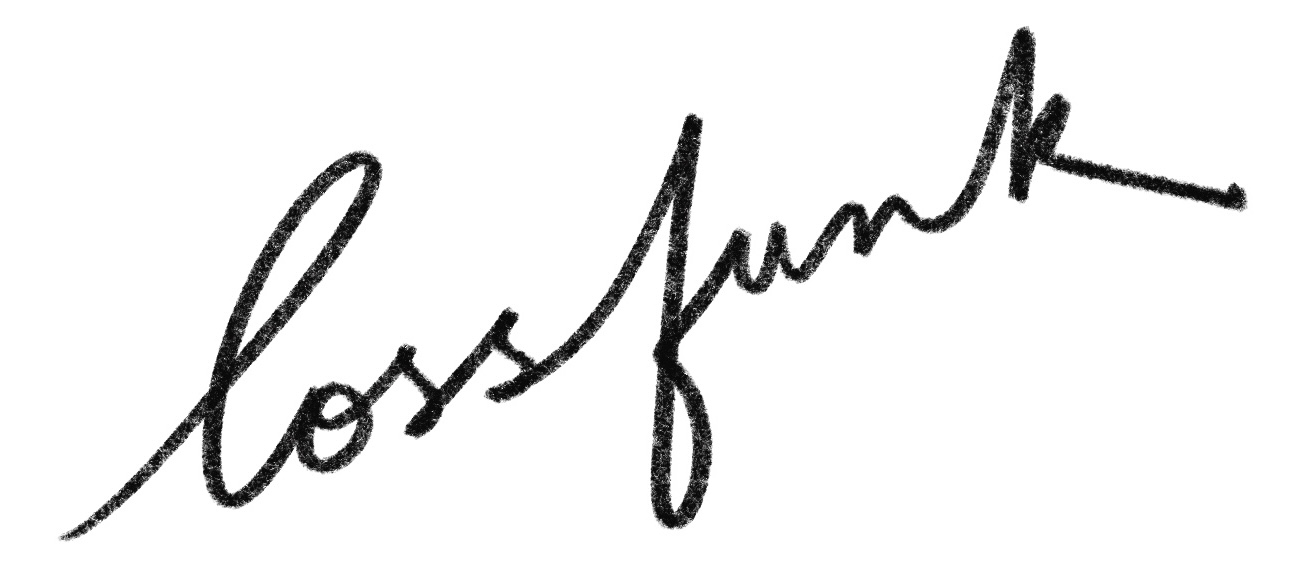}\\[0.85cm]
A Two-Graph Refinement of Paulsen's Lollipop Bounds
}

\author{Siddhartha Mahajan \and Paras Chopra}
\date{}
\maketitle

\begin{abstract}
Let \(a_{\Lop}(n)\) be the maximum number of regions into which \(n\)
lollipops divide the plane.  Paulsen introduced a second obstruction for this
problem, based on pairs of circles meeting at obtuse angle, in addition to the
stem-direction obstruction of Cutler--Karlsson--Sloane.  We recast Paulsen's
argument as a weighted problem for two graphs: a \(K_4\)-free graph \(D\) of
non-close stem pairs and a \(K_5\)-free graph \(E\) of non-intriguing circle
pairs.  For the total number \(C\) of pairwise crossings,
\[
  C\le 4\binom n2+|D|+|E|+|D\cap E|.
\]
Paulsen bounds the final term by \(|D|\).  We keep the overlap term and analyze
near-extremal configurations of \(D\) and \(E\).  This closes all of Paulsen's
remaining gaps up to \(n=17\), and also closes \(n=19\):
\[
\begin{gathered}
  a_{\Lop}(0),a_{\Lop}(1),\ldots,a_{\Lop}(17)\\
  =1,2,10,25,45,71,104,142,186,237,294,356,425,500,580,667,761,859,
\end{gathered}
\]
and
\[
  a_{\Lop}(19)=1076.
\]
The same method gives the one-region gaps
\[
  964\le a_{\Lop}(18)\le965,
  \qquad
  1193\le a_{\Lop}(20)\le1194.
\]
\end{abstract}

\section{Introduction and statement}

A \emph{lollipop} is a circle together with a half-line attached at one point
of the circle and extending radially outward.  Equivalently, it is determined
by a center \(C\in\mathbb R^2\), a radius \(r>0\), and a unit vector \(u\), and
is the union of
\[
  \{P: |P-C|=r\}
  \quad\text{and}\quad
  \{C+ru+tu:t\ge0\}.
\]
Let \(a_{\Lop}(n)\) be the largest possible number of connected regions into
which \(n\) lollipops divide the plane.

We work in generic arrangements: there are no tangencies, no triple crossing
points, and no crossing occurs at an anchor.  This entails no loss for an
extremal problem, since a non-generic maximizer can be perturbed without
decreasing the number of regions.  If \(C\) is the total number of crossings
between distinct lollipops, then
\begin{equation}\label{eq:regions-crossings}
  \#\text{regions}=C+n+1.
\end{equation}
Thus the region problem is equivalent to maximizing \(C\).

The values \(a_{\Lop}(0),\ldots,a_{\Lop}(4)=1,2,10,25,45\) are due to
Cutler, Karlsson, and Sloane \cite{CKS}.  Paulsen's recent note proves the
exact values for \(n=5,6,7,10\) and gives the best previously published upper
bounds for the remaining small cases \cite{Paulsen}.  The OEIS entry A389624
has been updated accordingly \cite{OEIS}.

Our contribution is purely combinatorial once Paulsen's two geometric inputs
are accepted.  We do not change the known construction; instead we sharpen the
upper bound by analyzing how the two extremal obstruction graphs can overlap.

\begin{theorem}\label{thm:main}
The following values are exact:
\[
\begin{aligned}
&a_{\Lop}(0),a_{\Lop}(1),\ldots,a_{\Lop}(17)\\
&\qquad=1,2,10,25,45,71,104,142,186,237,294,356,425,500,580,667,761,859,
\end{aligned}
\]
and
\[
  a_{\Lop}(19)=1076.
\]
Moreover,
\[
  964\le a_{\Lop}(18)\le965,
  \qquad
  1193\le a_{\Lop}(20)\le1194.
\]
\end{theorem}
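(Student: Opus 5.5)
The plan has two halves. The lower bounds come from the known construction, and the upper bounds come from maximizing the crossing bound over pairs of graphs. For the lower bounds I take the configurations of Cutler--Karlsson--Sloane and Paulsen that realize the listed values (and $964$, $1193$) as given. By \eqref{eq:regions-crossings}, a value $a$ corresponds to $C=a-n-1$ crossings, so the upper-bound task is to show
\[
  C\le a-n-1 \quad\text{for each listed } a,\qquad C\le 964-19+1=946 \ (n=18),\qquad C\le 1194-21=1173 \ (n=20).
\]
The starting point is the inequality from the abstract,
\[
  C\le 4\binom n2+|D|+|E|+|D\cap E|,
\]
with $D$ a $K_4$-free graph and $E$ a $K_5$-free graph on the $n$ lollipops. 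I accept this as Paulsen's geometric input. The rest is a purely extremal problem: bound $f(n)=\max\bigl(|D|+|E|+|D\cap E|\bigr)$ over such pairs.

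\textbf{Step 1 (crude bound).} Turán gives $|D|\le t_3(n)$ and $|E|\le t_4(n)$. Since $D\cap E\subseteq D$, we get $f(n)\le 2t_3(n)+t_4(n)$, which is Paulsen's bound. This bound would only be tight if $D=T_3(n)$ and $D\subseteq E=T_4(n)$. But a Turán $3$-partite graph cannot sit inside a $4$-partite Turán graph: $T_4(n)$ has a part of size about $n/4$ whose complement is a clique in $\overline{E}$, and $D$ restricted to that set is $K_4$-free. So the argument quantifies the loss.

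\textbf{Step 2 (trade-off).} Write $|D\cap E|=|D|-|D\setminus E|$, so
\[
  f=2|D|+|E|-|D\setminus E|.
\]
Note that $E\cup(D\setminus E)\supseteq D$. For each independent set $S$ of $E$, the graph $D[S]$ is $K_4$-free, hence has at most $t_3(|S|)$ edges. All those edges lie in $D\setminus E$. I will use stability (Erdős–Simonovits, or a Füredi-type inequality $|E|\le t_4(n)-\#\{\text{edges to delete to make }E\text{ 4-partite}\}$) to handle two cases:
\begin{itemize}
\item \textbf{$E$ close to 4-partite, with parts $V_1,\dots,V_4$.} Then $|D|-|D\setminus E|\le |D\cap E|\le$ the number of $D$-edges between parts. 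The quantity $2|D|-|D\setminus E|$ becomes $|D|+|D\cap E|$, and I bound it by optimizing a $K_4$-free $D$ against a 4-partition: roughly $t_3(n)+\max|D\cap E|$.
\item \textbf{$E$ far from 4-partite.} Then the deficiency in $|E|$ pays for the loss.
\end{itemize}
Concretely, I will reduce to maximizing, over part sizes $n_1+\dots+n_4=n$ and a 3-partition for $D$, the expression
\[
  2t_3(n)+t_4(n)-\min\sum_i t_3\text{-type losses inside } V_i,
\]
with exact correction terms.

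\textbf{Step 3 (finite verification).} For each $n\le20$, I compute the resulting upper bound on $f(n)$. I compare $4\binom n2+f(n)$ with the target $C$. For the exact cases and $n=19$ it should match the construction. For $n=18$ and $n=20$ it should leave a slack of exactly one. Small $n\le4$ are already known, and $5,6,7,10$ come from Paulsen.

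\textbf{Main obstacle.} The hard part is Step 2 near extremality. Stability only gives asymptotic closeness, whereas I need exact integer bounds for specific $n$. I would first try an exact local argument. Take a maximal configuration, pick a vertex $v$ of minimum combined degree $\deg_D(v)+\deg_E(v)+\deg_{D\cap E}(v)$, and induct:
\[
  f(n)\le f(n-1)+\max_v\text{weighted degree}.
\]
Then bound the weighted degree using the $K_4$/$K_5$-freeness of the neighborhoods. Where this induction bound is too weak, I would fall back on a case analysis of whether $\overline{E}$ contains a large clique that forces $D$-edges outside $E$.
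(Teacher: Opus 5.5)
Your reduction to bounding $\sigma(D,E)=|D|+|E|+|D\cap E|$ over $K_4$-free $D$ and $K_5$-free $E$ is the paper's framework, but the proposal has a genuine gap: it never proves anything beyond Paulsen's bound $2t_3(n)+t_4(n)$. Step~2 promises a reduction ``with exact correction terms'' that are never written down. Step~3 asserts that the computation ``should'' save exactly $1$, $2$, or $3$ for the relevant $n$, which is the content of the theorem, not an argument for it.

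None of the tools you name supplies the missing savings as you use them:
\begin{itemize}
\item Erd\H{o}s--Simonovits stability is asymptotic, as you note yourself.
\item The F\"uredi-type inequality you quote is actually exact, but you never extract a single concrete saving from it.
\item The fallback induction fails outright. $K_4$-freeness makes $N_D(v)$ triangle-free but does not bound $\deg_D(v)$, so the only control on the minimum weighted degree is the average $2\sigma/n$. This gives only $f(n)\le\frac{n}{n-2}f(n-1)$. Already at $n=8$, starting from $f(7)=50$, it yields $f(8)\le\floor{400/6}=66$, which is Paulsen's bound rather than the required $65$.
\end{itemize}

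Step~1 also contains a false claim. $T_3(n)\subseteq T_4(n)$ holds exactly when the part sizes of $T_4(n)$ can be grouped into those of $T_3(n)$. For example, at $n=10$ the parts $2,2,3,3$ group as $2+2,3,3$, giving $4,3,3$. This is precisely why Paulsen's bound is sharp at $n=5,6,7,10$. Your stated reason is also not the operative one: $D\subseteq E$ forces $D$ to have no edges at all inside a part of $E$, so $K_4$-freeness there is beside the point.

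What is missing is an exact mechanism, and the paper's is short. It has three parts.

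\textbf{Deficiency in $|D|$.} Since $\sigma\le2|D|+|E|$, every edge missing from $T_3(n)$ saves two. So for two savings one may assume $D=T_3(n)$, by the equality case of Tur\'an's theorem. For three savings one may assume $|D|\ge t_3(n)-1$. No stability is needed. For $n\in\{8,9,11,13\}$ the equality case together with the grouping obstruction already suffices.

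\textbf{Internal edges when $D=T_3(n)$.} Let the parts have sizes $p\ge q\ge r\ge4$, and put $m=|D\setminus E|$ and $x=|E\setminus D|$. Then $\sigma=3t_3(n)+x-2m$. $K_5$-freeness of $E$ forces two things: the internal $E$-graph on each part is triangle-free, and for internal edges $e,f$ in two different parts, some missing cross edge joins a vertex of $e$ to a vertex of $f$. This yields
\[
x\le\floor{p^2/4}\ (m=0),\qquad x\le\max\{\floor{p^2/4},p+q-2\}\ (m=1),\qquad x\le\max\{\floor{p^2/4},p+q-2,2p+q-5\}\ (m=2).
\]
Larger $m$ is handled by $|E|\le t_4(n)$.

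\textbf{The case $|D|=t_3(n)-1$.} The only dangerous subcase is $E=T_4(n)\supseteq D$. Then $D$ arises from $T_4(n)$ by deleting $t_4(n)-t_3(n)+1$ edges that meet every transversal $K_4$. Give each transversal $K_4$ of $K_{a,b,c,d}$ ($a\le b\le c\le d$) weight $1/(cd)$. This shows at least $ab$ deletions are needed, which exceeds $t_4(n)-t_3(n)+1$ for $n\in\{15,17,18,19,20\}$.

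Checking these explicit inequalities row by row is what produces the stated values. Without an argument of this kind, your Steps~2--3 establish no improvement at any $n$.
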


The matching lower bounds in Theorem \ref{thm:main} come from the standard
blow-up of Karlsson's optimal four-lollipop configuration.  We recall the
construction in Appendix \ref{app:lower-table}.  The rest of the paper proves
the upper bounds.

\section{Paulsen's two obstructions}

We use Paulsen's terminology \cite{Paulsen}.

\begin{definition}
A pair of lollipops is \emph{close} if the angle between its two stem
directions is at most \(90^\circ\).  A pair is \emph{intriguing} if its two
circles either do not intersect, or intersect at angle at most \(90^\circ\),
where the intersection angle is measured after orienting both circles in the
same direction.
\end{definition}

The following are the geometric inputs.  The first close-pair estimate is the
stem-angle lemma of Cutler--Karlsson--Sloane, and the intriguing-pair estimates
and forced intriguing-pair theorem are due to Paulsen.

\begin{lemma}[Cutler--Karlsson--Sloane and Paulsen]\label{lem:paulsen-inputs}
For a generic pair of lollipops:
\begin{enumerate}[label=(\roman*)]
  \item a close pair has at most \(5\) crossings;
  \item an intriguing pair has at most \(5\) crossings;
  \item a pair that is both close and intriguing has at most \(4\) crossings.
\end{enumerate}
Furthermore, among any four lollipops there is a close pair, and among any
five lollipops there is an intriguing pair.
\end{lemma}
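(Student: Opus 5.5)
The plan is to treat the crossing bounds (i)--(iii) by a local case analysis of one pair of lollipops, and the two Ramsey-type assertions by a pigeonhole argument for stems and an inversive-geometry argument for circles.

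For (i)--(iii), fix two lollipops \(A,B\) with circles \(\Gamma_A,\Gamma_B\) and stems \(s_A,s_B\). The possible crossings are: \(\Gamma_A\cap\Gamma_B\) (at most \(2\)), \(s_A\cap\Gamma_B\) and \(s_B\cap\Gamma_A\) (at most \(2\) each, since a ray meets a circle at most twice), and \(s_A\cap s_B\) (at most \(1\)). This gives the trivial bound \(7\). For (i) I would follow Cutler--Karlsson--Sloane. Suppose the stem directions \(u_A,u_B\) make an angle at most \(90^\circ\). Then \(s_B\) cannot both meet \(s_A\) and cross \(\Gamma_A\) twice while \(s_A\) also crosses \(\Gamma_B\) twice. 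I would check this by projecting onto the bisector of \(u_A,u_B\): both rays are monotone in that direction, and a ray starting on its own circle and meeting the other circle twice must start "behind" that circle. The two such conditions, together with a crossing of the rays, give an ordering contradiction, which removes at least two of the seven crossings.

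For (ii) the analogous mechanism is Paulsen's. If \(\Gamma_A,\Gamma_B\) are disjoint, the circle--circle term is \(0\), so the total is at most \(5\) at once. If they cross at an angle at most \(90^\circ\) (with coherent orientations), I would show that the lens geometry prevents both radial stems from each crossing the other circle twice. A stem leaving \(\Gamma_A\) outward and entering and leaving \(\Gamma_B\) forces its anchor into a region determined by the acute intersection, and the two anchor regions for \(A\) and \(B\) are incompatible; this costs one crossing beyond the two already lost. Part (iii) combines the two losses. I would verify that the configurations lost in (i) and (ii) are independent, so that together they cap the count at \(4\). This case check (which crossings each hypothesis kills, and that they are different ones) is routine but must be done carefully.

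For the first furthermore-claim, take four stem directions on the unit circle. The four consecutive angular gaps sum to \(360^\circ\), so one gap is at most \(90^\circ\), which gives a close pair.

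For the second, I would pass to inversive geometry. Represent each oriented circle by a unit spacelike vector \(v_i\in\mathbb R^{3,1}\), normalized with the common orientation. The Lorentz product \(\langle v_i,v_j\rangle\) then equals the cosine of the intersection angle when the circles cross, and has absolute value greater than \(1\) when they are disjoint. A non-intriguing pair means crossing at an obtuse angle, so \(\langle v_i,v_j\rangle\in(-1,0)\). For five pairwise non-intriguing circles, the Gram matrix \(G=(\langle v_i,v_j\rangle)\) would be a \(5\times5\) symmetric matrix with unit diagonal and off-diagonal entries in \((-1,0)\), realized in a space of dimension \(4\), so \(\det G=0\) and \(G\) has at most one negative eigenvalue. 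The main obstacle is to show that such a matrix cannot occur. I would first try a Perron--Frobenius argument on \(N=I-G\), which has positive entries. Its top eigenvector \(x\) is positive, and the rank and signature constraints force \(x^{\!\top}Gx\le 0\) for that positive combination. I would then show that \(\sum x_iv_i\) would be a timelike or null vector with positive product against a suitable configuration, which contradicts genericity or the orientation convention. If this fails, I would fall back to Paulsen's original planar argument, which chooses the smallest circle and analyzes the others' obtuse crossings with it.
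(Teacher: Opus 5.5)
\paragraph{Five circles.} Your four-stem pigeonhole argument is fine. The five-circle claim, which is the one part of this lemma the paper actually proves (Appendix~\ref{app:paulsen-proof}), is not reached by your Perron--Frobenius plan. Since \(G\) is singular, \(\lambda_{\min}(G)=1-\rho(I-G)\le 0\), so \(x^{\top}Gx\le 0\) for the positive Perron vector \(x\). Nothing contradictory follows from the constraints you list, however. The matrix \(G=\tfrac54 I-\tfrac14 J\), with \(J\) the all-ones matrix, has unit diagonal, off-diagonal entries \(-\tfrac14\in(-1,0)\), \(\det G=0\), no negative eigenvalue, and Perron vector \(\mathbf 1\) with \(\mathbf 1^{\top}G\mathbf 1=0\). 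It is the Gram matrix of a regular simplex in Euclidean \(\mathbb R^4\).

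What excludes it is information your sketch never uses. A Gram matrix from \(\mathbb R^{3,1}\) has at most three positive eigenvalues. Concretely, in the paper's coordinates every \(v_i\) has first coordinate \(1/r_i>0\), so any linear relation among the five \(v_i\) has coefficients of both signs. The missing key step is Paulsen's:
\begin{itemize}
\item write the relation as \(\sum_{i\in P}a_iv_i=\sum_{j\in N}b_jv_j\) with \(a_i,b_j>0\);
\item swap sides if necessary so that \(|P|\le 2\);
\item pair both sides with \(\sum_{i\in P}v_i\).
\end{itemize}
The left side gives \(a_1\) or \((a_1+a_2)(1+\langle v_1,v_2\rangle)>0\), while the right side is a sum of negative cross terms. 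Your ``timelike or null vector with positive product against a suitable configuration'' does not identify this step. The fallback you cite, a planar smallest-circle argument, is not Paulsen's proof and is not specified.

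\paragraph{Parts (i)--(iii).} The paper gives no proof of these; it imports them from Cutler--Karlsson--Sloane and Paulsen. Your sketches do not establish them, because each excludes too little. Write \(O_A,r_A\) for the center and radius of \(A\), and \(P_A=O_A+r_Au_A\) for its anchor.

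In (i), excluding only the pattern ``stems cross and each stem meets the other circle twice'' lowers \(7\) to \(6\). It leaves, for example, crossing circles with stem--circle counts \((2,1)\) and crossing stems. Your projection idea does give the needed stronger fact. If \(s_A\) meets \(\Gamma_B\) twice, the chord midpoint gives \((O_B-O_A)\cdot u_A>r_A\). Since \(u_A\cdot u_B\ge 0\), every \(Y\in s_B\) then has \((Y-O_A)\cdot u_A>r_A\), so \(s_B\) misses the disk of \(A\) entirely. The total is then at most \(2+2+0+1\), and otherwise at most \(2+1+1+1\).

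In (ii) you likewise need that \emph{neither} stem meets the other circle twice, not merely ``not both''. For crossing circles with \(|O_A-O_B|^2\le r_A^2+r_B^2\), the two parameters where the ray from \(O_A\) in direction \(u_A\) meets \(\Gamma_B\) have product \(|O_A-O_B|^2-r_B^2\le r_A^2\). So at most one of them exceeds \(r_A\).

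For (iii), ``the losses are independent'' is not an argument; an extra fact is needed. For instance, \(P_A\) inside \(\Gamma_B\), \(P_B\) inside \(\Gamma_A\), and crossing stems cannot coexist. At the crossing point \(X\), the facts \(|X-O_B|=r_B+|X-P_B|\) and \(|X-O_B|<|X-P_A|+r_B\) give \(|X-P_B|<|X-P_A|\), and symmetrically the reverse inequality holds. Combined with the product bound, this gives at most \(4\) for crossing circles. The argument for (i) gives at most \(3\) for disjoint circles.
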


For reference, Paulsen's linear-algebra proof of the last assertion is recalled
in Appendix \ref{app:paulsen-proof}, with a minor sign-cleanup to handle zero
coefficients.

Given an arrangement of \(n\) lollipops, define two graphs \(D\) and \(E\) on
\([n]\):
\[
  ij\in D
  \quad\Longleftrightarrow\quad
  L_i,L_j\text{ are not close},
\]
\[
  ij\in E
  \quad\Longleftrightarrow\quad
  L_i,L_j\text{ are not intriguing}.
\]
Lemma \ref{lem:paulsen-inputs} implies
\[
  D\text{ is }K_4\text{-free},
  \qquad
  E\text{ is }K_5\text{-free}.
\]

Let \(N=\binom n2\), and let \(c_{ij}\) be the number of crossings between
\(L_i\) and \(L_j\).  The four pair estimates in Lemma
\ref{lem:paulsen-inputs} are summarized by
\begin{equation}\label{eq:pair-weight}
  c_{ij}\le
  4+\mathbf{1}_{ij\in D}
   +\mathbf{1}_{ij\in E}
   +\mathbf{1}_{ij\in D\cap E}.
\end{equation}
Summing over all pairs gives
\begin{equation}\label{eq:two-graph-bound}
  C\le 4N+\sigma(D,E),
  \qquad
  \sigma(D,E):=|D|+|E|+|D\cap E|.
\end{equation}

Let \(t_r(n)\) denote the number of edges of the balanced complete
\(r\)-partite Turan graph \(T_r(n)\).  Since \(D\) is \(K_4\)-free and \(E\)
is \(K_5\)-free,
\[
  |D|\le t_3(n),
  \qquad
  |E|\le t_4(n).
\]
Paulsen additionally uses \(|D\cap E|\le |D|\) and obtains
\begin{equation}\label{eq:paulsen-crossing-upper}
  C\le U_P(n):=4\binom n2+2t_3(n)+t_4(n).
\end{equation}
Equivalently,
\[
  C\le
  7\binom n2
  -2\ceil{\frac{n(n-3)}6}
  -\ceil{\frac{n(n-4)}8}.
\]
We improve \eqref{eq:paulsen-crossing-upper} by showing that equality, and in
many cases near equality, is incompatible with the simultaneous structures of
\(D\) and \(E\).

\section{One-defect cases}

The easiest improvements use only the equality case of Turan's theorem.

\begin{proposition}\label{prop:one-defect}
For \(n\in\{8,9,11,13\}\),
\[
  C\le U_P(n)-1.
\]
Consequently,
\[
  a_{\Lop}(8)=186,
  \quad
  a_{\Lop}(9)=237,
  \quad
  a_{\Lop}(11)=356,
  \quad
  a_{\Lop}(13)=500.
\]
\end{proposition}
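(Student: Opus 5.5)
Suppose, for contradiction, that an arrangement with \(n\in\{8,9,11,13\}\) lollipops attains \(C=U_P(n)\). We combine \eqref{eq:two-graph-bound} with the trivial bound \(|D\cap E|\le |D|\). Together with the Turán bounds \(|D|\le t_3(n)\) and \(|E|\le t_4(n)\), they give
\[
C\le 4\binom n2+|D|+|E|+|D\cap E|\le 4\binom n2+2t_3(n)+t_4(n)=U_P(n).
\]
So equality forces three things at once: \(|D|=t_3(n)\), \(|E|=t_4(n)\), and \(D\cap E=D\), i.e.\ \(D\subseteq E\).

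By the equality case of Turán's theorem, the \(K_4\)-free graph \(D\) with \(t_3(n)\) edges is \(T_3(n)\), and the \(K_5\)-free graph \(E\) with \(t_4(n)\) edges is \(T_4(n)\). The key step is to translate \(D\subseteq E\) into a condition on partitions. Equivalently, every non-edge of \(E\) is a non-edge of \(D\). The non-edges of a complete multipartite graph are exactly the pairs lying in a common part. Hence each part of the four-part partition of \(E\) lies inside a single part of the three-part partition of \(D\). In other words, the balanced \(4\)-partition refines the balanced \(3\)-partition, so each part size of \(T_3(n)\) is a sum of part sizes of \(T_4(n)\), using each \(T_4\) part exactly once.

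It remains to check that no such refinement exists for these four values of \(n\):
\begin{itemize}
  \item \(n=8\): the \(T_4\) sizes are \(2,2,2,2\) and the \(T_3\) sizes are \(3,3,2\). A part of size \(3\) cannot be a sum of \(2\)'s.
  \item \(n=9\): the \(T_4\) sizes are \(3,2,2,2\) and the \(T_3\) sizes are \(3,3,3\). The only way to write \(3\) as such a sum is the single part \(3\), and it cannot be used twice.
  \item \(n=11\): the \(T_4\) sizes are \(3,3,3,2\) and the \(T_3\) sizes are \(4,4,3\). A \(4\) would have to be \(2+2\), but there is only one \(2\).
  \item \(n=13\): the \(T_4\) sizes are \(4,3,3,3\) and the \(T_3\) sizes are \(5,4,4\). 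A \(5\) is not a sum of distinct members of \(\{4,3,3,3\}\).
\end{itemize}
This contradiction shows \(C\ne U_P(n)\). Since \(C\) is an integer and \(C\le U_P(n)\), we get \(C\le U_P(n)-1\). (As a sanity check, the same refinement test succeeds for \(n=10\): \(4=2+2\), \(3=3\), \(3=3\). This is consistent with Paulsen's bound being sharp there.)

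For the consequences, apply \eqref{eq:regions-crossings}, which gives \(a_{\Lop}(n)\le U_P(n)+n\). For each of the four values of \(n\) we evaluate \(U_P(n)\) explicitly and check that this upper bound equals the blow-up lower bound recorded in Appendix \ref{app:lower-table}. The only place any real care is needed is the partition-refinement reformulation of \(D\subseteq E\). The rest is the uniqueness part of Turán's theorem and small arithmetic, and I expect no genuine obstacle there.
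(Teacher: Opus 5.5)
Your proposal is correct and follows the paper's proof essentially verbatim. Equality forces \(D=T_3(n)\), \(E=T_4(n)\) and \(D\subseteq E\), so the balanced \(4\)-partition must refine the balanced \(3\)-partition, and the part-size check rules this out for \(n\in\{8,9,11,13\}\); your explicit non-edge justification of the refinement step and the \(n=10\) sanity check are welcome additions but do not change the argument.
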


\begin{proof}
Equality in \eqref{eq:paulsen-crossing-upper} would force
\[
  |D|=t_3(n),
  \qquad
  |E|=t_4(n),
  \qquad
  D\subseteq E.
\]
Thus \(D=T_3(n)\) and \(E=T_4(n)\).  Since \(D\subseteq E\), every part of the
complete quadripartite graph \(E\) must be contained in one part of the
complete tripartite graph \(D\).  Equivalently, the part sizes of \(T_4(n)\)
must be groupable into the part sizes of \(T_3(n)\).

For the four values in question this grouping is impossible:
\[
\begin{array}{@{}c c c@{}}
\toprule
n&T_3(n)\text{ part sizes}&T_4(n)\text{ part sizes}\\
\midrule
8&3,3,2&2,2,2,2\\
9&3,3,3&3,2,2,2\\
11&4,4,3&3,3,3,2\\
13&5,4,4&4,3,3,3\\
\bottomrule
\end{array}
\]
Therefore \(\sigma(D,E)\le 2t_3(n)+t_4(n)-1\), so
\(C\le U_P(n)-1\).  The construction in Appendix \ref{app:lower-table}
attains the resulting upper bound.
\end{proof}

\section{Internal edges over an extremal tripartition}

The remaining improvements need a small extremal lemma.  Assume first that
\(D=T_3(n)\), with parts \(A,B,C\) of sizes
\[
  p\ge q\ge r.
\]
Write
\[
  m=|D\setminus E|,
  \qquad
  x=|E\setminus D|.
\]
Thus \(m\) counts missing cross-edges of \(E\) relative to the tripartition,
while \(x\) counts internal \(E\)-edges inside \(A\), \(B\), and \(C\).

\begin{lemma}\label{lem:internal-bounds}
Assume \(p\ge q\ge r\ge4\), \(D=T_3(n)\), and \(E\) is \(K_5\)-free.  With
\(m\) and \(x\) as above:
\begin{enumerate}[label=(\alph*)]
  \item if \(m=0\), then
  \[
    x\le\floor{p^2/4};
  \]
  \item if \(m=1\), then
  \[
    x\le\max\left\{\floor{p^2/4},\,p+q-2\right\};
  \]
  \item if \(m=2\), then
  \[
    x\le\max\left\{\floor{p^2/4},\,p+q-2,\,2p+q-5\right\}.
  \]
\end{enumerate}
\end{lemma}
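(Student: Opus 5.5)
The plan is to classify the internal $E$-edges by which parts of the tripartition $A,B,C$ they lie in, and to use the fact that $E$ contains every cross edge except the $m$ missing ones. The basic observation is a \emph{transversal $K_5$ principle}. Suppose $S$ is a vertex set made of internal cliques and single vertices drawn from different parts, with $|S|=5$, such that every pair inside a part is an $E$-edge and no missing cross edge has both ends in $S$. Then $S$ spans a $K_5$ in $E$. Two configurations supply such sets:
\begin{itemize}
\item a triangle inside one part, together with one vertex from each of the other two parts;
\item an internal edge in one part, an internal edge in a second part, and one vertex of the third part.
\end{itemize}
Since every part has size at least $r\ge4$, whenever we need a single vertex in a part we have at least four candidates. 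At most $m\le2$ of them can be spoiled by missing cross edges, so a safe choice always remains.

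For (a), with $m=0$, both configurations are forbidden outright. Hence internal edges occur in at most one part, and the internal graph there is triangle-free. Mantel's theorem then gives $x\le\floor{s^2/4}\le\floor{p^2/4}$, where $s$ is the size of that part.

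For (b), let $m=1$ with the missing edge $ab$, $a\in X$, $b\in Y$, and let $Z$ be the third part.
\begin{itemize}
\item A triangle in any part extends to a $K_5$ by choosing the two outside vertices away from $a$ and $b$. So every part's internal graph is triangle-free.
\item If only one part has internal edges, Mantel gives $x\le\floor{p^2/4}$.
\item If two parts carry internal edges and one of them is $Z$, take an internal edge there, an internal edge in the other part, and a vertex of the third part avoiding $a,b$. The missing edge cannot lie inside this $5$-set, so it is a $K_5$.
\item Hence the two parts carrying internal edges are exactly $X$ and $Y$. For every internal edge $e\subseteq X$ and every internal edge $f\subseteq Y$, the edge $ab$ must lie inside $e\cup f$, so $a\in e$ and $b\in f$.
\end{itemize}
It follows that the internal edges of $X$ form a star at $a$ and those of $Y$ a star at $b$. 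Therefore $x\le(|X|-1)+(|Y|-1)\le p+q-2$.

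For (c), with $m=2$, I would run the same analysis, casing on the shape of the two missing edges: disjoint or sharing a vertex, and joining the same pair of parts or different pairs.
\begin{itemize}
\item Triangles inside parts are still killed by choosing outside vertices that avoid the at most four endpoints of missing edges. So every internal graph is triangle-free, and the one-part case again gives $\floor{p^2/4}$.
\item For two parts $X,Y$ carrying internal edges, every pair $(e,f)$ with $e\subseteq X$, $f\subseteq Y$ internal edges must contain a missing edge inside $e\cup f$. Here the third-part vertex can always be chosen safely, unless both missing edges touch that part at distinct vertices; even then $r\ge4$ leaves a choice.
\item This reduces to a covering problem: the bipartite "missing" graph between $X$ and $Y$, with at most two edges, must meet every pair $(e,f)$. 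The extremal options are then two stars, one per part, giving $p+q-2$ as before. Alternatively, the two missing edges share an endpoint $b\in Y$ and go to $a_1,a_2\in X$. Then $Y$'s internal edges form a star at $b$, while $X$'s internal edges must each contain $a_1$ or $a_2$. This is a triangle-free double star, giving roughly $2|X|-3$ edges, plus $|Y|-1$ at $b$, minus a small correction. I expect this to yield exactly $2p+q-5$.
\item I would also check the case where all three parts carry internal edges. This requires the missing edges to cover pairs among $(X,Y)$, $(X,Z)$ and $(Y,Z)$ simultaneously. With only two missing edges, this should force the internal graphs to be tiny, and they are dominated by the bounds above.
\end{itemize}

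The main obstacle is the bookkeeping in (c). The delicate points are computing the exact maximum of a triangle-free graph on $X$ all of whose edges meet $\{a_1,a_2\}$, and confirming that the constant comes out to $-5$ rather than $-4$. This needs care on whether $a_1a_2$ can itself be an edge: with it, triangles $a_1a_2v$ appear, so we must take at most $|X|-2$ common neighbours. I would also need to exclude the degenerate situations where one part has a single internal edge and can therefore tolerate unusual covering patterns.
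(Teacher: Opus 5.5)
Your argument follows the paper's proof. It uses the same two forbidden $K_5$ patterns with ``safe'' outside vertices, triangle-freeness of the internal graphs, the covering condition between two active parts, and Mantel/star counts. Parts (a) and (b) are complete and agree with the paper.

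Part (c) is left unfinished, and there is a gap. Your claim that ``the extremal options are two stars or a shared endpoint'' is unproved in exactly the subcase where the covering condition is a genuine disjunction. This is the case of two \emph{disjoint} missing edges $a_1b_1$, $a_2b_2$, both joining the active parts $X$ and $Y$ (sizes $s\ge t$, internal graphs $H_X,H_Y$). Every edge of $H_X$ meets $\{a_1,a_2\}$ and every edge of $H_Y$ meets $\{b_1,b_2\}$, but neither graph need be a star. The step you need is as follows.
\begin{itemize}
\item Suppose $H_X$ has one edge containing $a_1$ but not $a_2$, and another containing $a_2$ but not $a_1$. Then every edge of $H_Y$ contains both $b_1$ and $b_2$, so $|E(H_Y)|\le1$ and $x\le(2s-4)+1$.
\item Otherwise $H_X$ is a star at some $a_i$, and $x\le(s-1)+(2t-4)$.
\end{itemize}
Both bounds are at most $2s+t-5$, using $2\le t\le s$. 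This is the paper's final paragraph, and it is what your worry about a part with a single internal edge is pointing at.

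Two smaller points. First, three active parts are not merely ``tiny'' but impossible. By your own covering principle, each of the three pairs of active parts needs its own missing edge between those two parts, and $m=2$ supplies only two. Second, the double-star count should be done exactly. A triangle-free graph on $s$ vertices whose edges all meet $\{a_1,a_2\}$ has:
\begin{itemize}
\item at most $s-1$ edges if $a_1a_2$ is an edge, since then $a_1,a_2$ have \emph{no} common neighbour (not ``at most $|X|-2$'');
\item at most $2(s-2)$ edges otherwise.
\end{itemize}
So it has at most $2s-4$ edges for $s\ge4$. Your shared-endpoint case then gives $(2|X|-4)+(|Y|-1)\le 2p+q-5$, whichever of $X,Y$ is larger.

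Incidentally, you correctly located the extremal shape: shared endpoint on one side, $K_{2,|X|-2}$ on the other, which is $K_5$-free and attains $2|X|+|Y|-5$. The paper's remark that the shared-endpoint-in-$Y$ case is ``even smaller'' is inaccurate, since that case attains $2s+t-5$. The stated bound is unaffected.
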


\begin{proof}
Let \(H_A,H_B,H_C\) be the internal graphs induced by \(E\) on the three parts.
We call a part \emph{active} if its internal graph has at least one edge.

First, each \(H_A,H_B,H_C\) is triangle-free.  Indeed, if, say, \(H_A\)
contained a triangle, then since \(m\le2\) and \(|B|,|C|\ge4\), we could choose
vertices \(b\in B\) and \(c\in C\) incident to no missing cross-edge.  The
triangle together with \(b\) and \(c\) would span a \(K_5\) in \(E\), a
contradiction.

Second, suppose \(X\) and \(Y\) are two active parts, with third part \(Z\).
For every internal edge \(e\in H_X\) and every internal edge \(f\in H_Y\), some
missing cross-edge must have one endpoint in \(e\) and one endpoint in \(f\).
Otherwise, choosing a vertex of \(Z\) incident to no missing cross-edge would
make \(e\cup f\cup\{z\}\) a \(K_5\) in \(E\).

If \(m=0\), the second observation implies that at most one part is active.
Mantel's theorem inside the largest active part gives \(x\le\floor{p^2/4}\).

Let \(m=1\).  If only one part is active, the same Mantel bound applies.  If
two parts \(X,Y\) are active, the unique missing cross-edge must join \(X\) to
\(Y\), say \(xy\).  The second observation implies that every edge of \(H_X\)
contains \(x\), and every edge of \(H_Y\) contains \(y\).  Hence the two
internal graphs are stars, contributing at most \((|X|-1)+(|Y|-1)\le p+q-2\)
edges.

Let \(m=2\).  One active part gives the Mantel bound.  Three active parts are
impossible, since every pair of active parts would require a missing cross-edge
between them.  Thus suppose exactly two parts \(X,Y\) are active, with
\(|X|=s\ge t=|Y|\).  If only one of the two missing cross-edges joins \(X\) to
\(Y\), the preceding star argument gives at most \(s+t-2\le p+q-2\) internal
edges.

It remains to handle the case where both missing cross-edges join \(X\) to
\(Y\).  Write them as \(a_1b_1\) and \(a_2b_2\), where \(a_i\in X\) and
\(b_i\in Y\), allowing the two missing edges to share an endpoint.  The second
observation says that for every \(e\in H_X\) and \(f\in H_Y\), there is an
\(i\in\{1,2\}\) with \(a_i\in e\) and \(b_i\in f\).

If the two missing edges share an endpoint in \(X\), then every edge of
\(H_X\) contains that endpoint, while every edge of \(H_Y\) meets
\(\{b_1,b_2\}\).  Since \(H_Y\) is triangle-free, the latter contributes at
most \(2t-4\) edges, and \(H_X\) contributes at most \(s-1\).  The total is at
most \(s+2t-5\le2s+t-5\).  The case of a shared endpoint in \(Y\) is even
smaller, by symmetry and \(s\ge t\).

Finally assume the two missing edges are disjoint.  Every edge of \(H_X\) meets
\(\{a_1,a_2\}\), and every edge of \(H_Y\) meets \(\{b_1,b_2\}\).  Classify an
edge by which distinguished endpoints it contains.  If \(H_X\) contains both
an edge using only \(a_1\) and an edge using only \(a_2\), then every edge of
\(H_Y\) must contain both \(b_1\) and \(b_2\), so \(|E(H_Y)|\le1\) and the
triangle-free bound \(|E(H_X)|\le2s-4\) gives total at most \(2s-3\le2s+t-5\).
Otherwise all edges of \(H_X\) contain a common distinguished endpoint, so
\(|E(H_X)|\le s-1\); the same classification gives \(|E(H_Y)|\le2t-4\), and
the total is at most \(s+2t-5\le2s+t-5\).  This proves the \(m=2\) bound.
\end{proof}

We also need one standard covering fact.

\begin{lemma}\label{lem:k4-cover}
Let \(K_{a,b,c,d}\) be a complete quadripartite graph with
\(a\le b\le c\le d\).  To destroy all transversal \(K_4\)'s, one must delete
at least \(ab\) edges.  This is sharp.
\end{lemma}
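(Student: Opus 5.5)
Sharpness is immediate: delete all \(ab\) edges between the two smallest parts \(P_1,P_2\) (sizes \(a,b\)). Every transversal \(K_4\) uses one vertex from each part, so it uses an edge between \(P_1\) and \(P_2\), and is destroyed.

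For the lower bound, let \(F\) be a set of deleted edges meeting every transversal \(K_4\). I will exhibit \(ab\) transversal \(K_4\)'s that are pairwise edge-disjoint. Since each needs its own deleted edge, \(|F|\ge ab\) follows.

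\textbf{Edge-disjoint family.} Write the parts as \(P_1=\{u_1,\dots,u_a\}\), \(P_2=\{v_1,\dots,v_b\}\), and pick \(b\) vertices \(w_1,\dots,w_b\in P_3\) (possible since \(c\ge b\)) and \(b\) vertices \(z_1,\dots,z_b\in P_4\) (since \(d\ge b\)). The structure is a Latin-square design. For \(i\in[a]\) and \(j\in\mathbb Z_b\), take the quadruple
\[
  Q_{ij}=\{u_i,\;v_j,\;w_{j+i},\;z_{j+2i}\},
\]
with indices read mod \(b\).

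\textbf{Checking the six edge types.}
\begin{itemize}
  \item For \(P_1P_2\), \(P_1P_3\) and \(P_1P_4\): once \(i\) is fixed, the map \(j\mapsto v_j,w_{j+i},z_{j+2i}\) is injective in each coordinate, so the edge determines \((i,j)\).
  \item For \(P_2P_3\) and \(P_2P_4\): the edge gives \(j\) together with \(j+i\) (resp.\ \(j+2i\)) mod \(b\). Since \(i\le a\le b\), the residue of \(i\) mod \(b\) is recovered uniquely if we index \(i\in\{0,\dots,a-1\}\).
  \item For \(P_3P_4\): the edge gives \(j+i\) and \(j+2i\), hence \(i\) mod \(b\), and then \(j\).
\end{itemize}

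The expected obstacle is the \(P_2P_4\) case, which recovers only \(2i\) mod \(b\) and so fails when \(b\) is even. I would repair it by replacing the shifts \(i,2i\) with a pair of orthogonal Latin squares of order \(b\). These exist unless \(b\in\{2,6\}\), which leaves those cases to treat separately. A cleaner alternative is needed to avoid that case split. Take the quadruples to be the rows of an orthogonal array \(OA(b,4)\) restricted to \(a\) symbols in the first coordinate, equivalently a transversal design. For \(b=2,6\), with \(a\le b\), I would instead argue directly by a weighting/LP duality argument. Assign weight \(1/(cd)\)-type fractional values to all transversal \(K_4\)'s on \(P_1\times P_2\times P_3'\times P_4'\), with \(|P_3'|=|P_4'|=b\). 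Each edge then carries total weight at most \(1\) while the total weight is \(ab\). Fractional packing suffices, since each deleted edge kills total weight at most \(1\).

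This fractional version works uniformly. Use uniform weight \(1/b^2\) on all \(a\,b\,b\,b\) quadruples in \(P_1\times P_2\times P_3'\times P_4'\).
\begin{itemize}
  \item A \(P_1P_2\) edge lies in \(b^2\) of them, so its load is \(1\).
  \item An edge involving \(P_3'\) or \(P_4'\) lies in at most \(b\cdot b\) of them (for example \(P_3'P_4'\) gives \(ab\le b^2\)), so its load is at most \(1\).
\end{itemize}
The total weight is \(ab^3/b^2=ab\). This proves \(|F|\ge ab\) without any design theory, so this fractional argument is the proof I would actually write.
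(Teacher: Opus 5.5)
Your final fractional argument is correct and is essentially the paper's proof. The paper puts weight \(1/(cd)\) on all \(abcd\) transversal \(K_4\)'s of \(K_{a,b,c,d}\) and notes that each edge lies in at most \(cd\) of them; you restrict to a \(K_{a,b,b,b}\) subgraph with uniform weight \(1/b^2\), which is the same fractional-packing (LP-duality) bound, and you are right to drop the Latin-square detour, since the cyclic version fails for even \(b\) and the orthogonal-Latin-square repair leaves \(b\in\{2,6\}\) open.
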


\begin{proof}
Deleting all edges between the two smallest parts destroys every transversal
\(K_4\), so \(ab\) deletions suffice.

For the lower bound, assign weight \(1/(cd)\) to every transversal \(K_4\).
The total weight is
\[
  abcd\cdot\frac1{cd}=ab.
\]
An edge between parts of sizes \(s\) and \(t\) lies in the product of the other
two part sizes many transversal \(K_4\)'s.  This product is at most \(cd\), so
each deleted edge covers weight at most \(1\).  At least \(ab\) deleted edges
are necessary.
\end{proof}

\section{Two- and three-defect cases}

The next propositions apply the preceding lemmas to \(\sigma(D,E)\).

\begin{proposition}\label{prop:two-defect}
For \(n\in\{12,14,16\}\),
\[
  C\le U_P(n)-2.
\]
Consequently,
\[
  a_{\Lop}(12)=425,
  \qquad
  a_{\Lop}(14)=580,
  \qquad
  a_{\Lop}(16)=761.
\]
\end{proposition}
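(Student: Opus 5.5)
The plan is to reduce the claim to the inequality \(\sigma(D,E)\le 2t_3(n)+t_4(n)-2\) and then split according to whether \(D\) is the Turan graph. First I would rewrite \(\sigma\) using \(m=|D\setminus E|\), which gives \(|D\cap E|=|D|-m\). Hence
\[
  \sigma(D,E)=2|D|+|E|-m .
\]
If \(|D|\le t_3(n)-1\), then \(|E|\le t_4(n)\) and \(m\ge0\) immediately give \(\sigma\le 2t_3(n)+t_4(n)-2\). So I may assume \(|D|=t_3(n)\). By the uniqueness in Turan's theorem this forces \(D=T_3(n)\), with parts \(p\ge q\ge r\).

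With \(D=T_3(n)\) we have \(|E|=t_3(n)-m+x\), where \(x=|E\setminus D|\). Then
\[
  \sigma=3t_3(n)-2m+x,
\]
and the target becomes
\[
  x\le t_4(n)-t_3(n)+2m-2 .
\]
The relevant data are:
\begin{itemize}
  \item \(n=12\): parts \(4,4,4\), \(t_3=48\), \(t_4=54\), so the difference is \(6\).
  \item \(n=14\): parts \(5,5,4\), \(t_3=65\), \(t_4=73\), so the difference is \(8\).
  \item \(n=16\): parts \(6,5,5\), \(t_3=85\), \(t_4=96\), so the difference is \(11\).
\end{itemize}
In every case \(r\ge4\), so Lemma \ref{lem:internal-bounds} applies.

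For large \(m\) the trivial bound suffices. The inequality \(|E|\le t_4(n)\) gives \(x\le t_4-t_3+m\). This is at most \(t_4-t_3+2m-2\) as soon as \(m\ge2\), so only \(m\in\{0,1\}\) remains, and there I would apply Lemma \ref{lem:internal-bounds}(a),(b).
\begin{itemize}
  \item \(n=12\) (need \(x\le 4+2m\)): for \(m=0\) we get \(x\le\floor{16/4}=4\), and for \(m=1\) we get \(x\le\max\{4,6\}=6\).
  \item \(n=14\) (need \(x\le 6+2m\)): for \(m=0\) we get \(x\le\floor{25/4}=6\), and for \(m=1\) we get \(x\le\max\{6,8\}=8\).
  \item \(n=16\) (need \(x\le 9+2m\)): for \(m=0\) we get \(x\le 9\), and for \(m=1\) we get \(x\le\max\{9,9\}=9\le 11\).
\end{itemize}
All required inequalities hold, which gives \(C\le U_P(n)-2\).

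The main point to check carefully is the arithmetic matching. The case \(m=0\) is tight in each instance, since the Mantel bound equals the required slack exactly. The case \(m=1\) is also tight for \(n=12,14\). So the Turan values and part sizes must be computed correctly, and the use of uniqueness of the Turan extremal graph must be justified. Finally, I would compare \(U_P(n)-2+n+1\) against the construction values \(425,580,761\) from Appendix \ref{app:lower-table} to obtain the exact values.
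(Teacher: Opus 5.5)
Your proposal is correct and follows the paper's proof essentially step for step. It uses the same split between \(|D|\le t_3(n)-1\) and \(D=T_3(n)\), the same reduction to \(x-2m\le\delta-2\), the trivial bound \(x\le\delta+m\) for \(m\ge2\), and Lemma \ref{lem:internal-bounds}(a),(b) for \(m\in\{0,1\}\) with the same numerical values. Your identity \(\sigma=2|D|+|E|-m\) is just a repackaging of the paper's use of \(|D\cap E|\le|D|\).
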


\begin{proof}
If \(|D|\le t_3(n)-1\), then
\[
  \sigma(D,E)\le2|D|+|E|\le2(t_3(n)-1)+t_4(n),
\]
so two crossings are already saved.

It remains to consider \(D=T_3(n)\).  Put \(\delta=t_4(n)-t_3(n)\).  Since
\[
  |E|=t_3(n)-m+x,
  \qquad
  |D\cap E|=t_3(n)-m,
\]
we have
\begin{equation}\label{eq:sigma-tripartite}
  \sigma(D,E)=3t_3(n)+x-2m.
\end{equation}
Saving two crossings is equivalent to
\[
  x-2m\le\delta-2.
\]
If \(m\ge2\), this follows from \(|E|\le t_4(n)\), since \(x\le\delta+m\) and
therefore \(x-2m\le\delta-m\le\delta-2\).  For \(m=0,1\), Lemma
\ref{lem:internal-bounds} gives the following bounds:
\[
\begin{array}{@{}c c c c c@{}}
\toprule
n&(p,q,r)&\delta&m=0&m=1\\
\midrule
12&(4,4,4)&6&4&6\\
14&(5,5,4)&8&6&8\\
16&(6,5,5)&11&9&9\\
\bottomrule
\end{array}
\]
In each row these imply \(x\le\delta-2\) for \(m=0\) and \(x\le\delta\) for
\(m=1\), as required.  The lower construction attains the resulting upper
bound.
\end{proof}

\begin{proposition}\label{prop:three-defect}
For \(n\in\{15,17,18,19,20\}\),
\[
  C\le U_P(n)-3.
\]
Consequently,
\[
  a_{\Lop}(15)=667,
  \qquad
  a_{\Lop}(17)=859,
  \qquad
  a_{\Lop}(19)=1076,
\]
and
\[
  964\le a_{\Lop}(18)\le965,
  \qquad
  1193\le a_{\Lop}(20)\le1194.
\]
\end{proposition}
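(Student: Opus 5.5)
The plan is to show \(\sigma(D,E)\le 2t_3(n)+t_4(n)-3\) by splitting on the defect \(t_3(n)-|D|\), in the same way as in Proposition \ref{prop:two-defect}. The ranges \(1\le n\le 20\) of Lemma \ref{lem:internal-bounds} apply, since here \(r=\lfloor n/3\rfloor\ge5\ge4\).

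\emph{Large defect.} If \(|D|\le t_3(n)-2\), then \(\sigma\le 2|D|+|E|\le 2t_3(n)-4+t_4(n)\), and we are done.

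\emph{Defect one.} Suppose \(|D|=t_3(n)-1\). The crude bound gives only two savings, so I must rule out the simultaneous equalities \(|E|=t_4(n)\) and \(D\subseteq E\).
\begin{enumerate}
  \item I will invoke the standard stability fact (Brouwer / Hanson--Toft type): a \(K_4\)-free graph with more than \(t_3(n)-\lfloor n/3\rfloor+1\) edges is \(3\)-partite. Here \(\lfloor n/3\rfloor\ge5\), so \(D\) is a complete tripartite graph \(K_{a,b,c}\) minus at most one edge, with parts nearly balanced. The options are \(T_3(n)\) minus an edge, or a partition unbalanced by one step with exactly \(t_3(n)-1\) edges, such as \((6,5,4)\) for \(n=15\).
  \item Equality \(|E|=t_4(n)\) forces \(E=T_4(n)\), with part sizes at least \(4\).
  \item If some part \(P\) of \(E\) met two parts of \(D\), then all \(D\)-cross-edges inside \(P\) would be missing from \(E\). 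That contradicts \(D\subseteq E\), unless they number at most one, in which case we are in the one-missing-edge case.
  \item Since \(|P|\ge4\), splitting \(P\) across two \(D\)-parts produces at least \(3\) such cross pairs. Hence every \(E\)-part lies inside a \(D\)-part, and I reduce to a grouping check as in Proposition \ref{prop:one-defect}.
\end{enumerate}
The grouping check must be done for each \(n\in\{15,17,18,19,20\}\) and each admissible near-balanced tripartition. For example, for \(n=15\) the part sizes \(4,4,4,3\) cannot be grouped into \(5,5,5\) or \(6,5,4\). I will list these in a table.

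\emph{Defect zero.} Suppose \(D=T_3(n)\), with parts \(p\ge q\ge r\). By \eqref{eq:sigma-tripartite}, three savings is equivalent to
\[
x-2m\le\delta-3,\qquad \delta=t_4(n)-t_3(n).
\]
\begin{enumerate}
  \item If \(m\ge3\), this follows from \(x\le\delta+m\), which holds because \(|E|\le t_4(n)\).
  \item For \(m=0,1,2\), I apply Lemma \ref{lem:internal-bounds}. I need \(\lfloor p^2/4\rfloor\le\delta-3\), \(\max\{\lfloor p^2/4\rfloor,p+q-2\}\le\delta-1\), and \(\max\{\lfloor p^2/4\rfloor,p+q-2,2p+q-5\}\le\delta+1\).
\end{enumerate}
I will tabulate these. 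For example, at \(n=20\) we have \((7,7,6)\), \(\delta=17\), and the bounds \(12\le14\), \(12\le16\), \(16\le18\).

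The values of \(a_{\Lop}\) then follow from \eqref{eq:regions-crossings} together with the construction in Appendix \ref{app:lower-table}. The main obstacle is the defect-one case: justifying the stability step cleanly and enumerating every near-balanced tripartition with \(t_3(n)-1\) edges. If a citation is unwanted, I would first try to prove \(3\)-partiteness directly by deleting a minimum-degree vertex and inducting on \(n\).
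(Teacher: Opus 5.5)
Your proposal is correct. The large-defect case and the defect-zero case (the reduction to $x-2m\le\delta-3$, the $m\ge3$ shortcut, and the checks from Lemma~\ref{lem:internal-bounds}) are the same as in the paper. The dangerous subcase $|D|=t_3(n)-1$, $E=T_4(n)$, $D\subseteq E$ is where you take a genuinely different route.

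The paper never determines the structure of $D$. It only observes that $E\setminus D$, which has $(t_4(n)-t_3(n))+1$ edges, must meet every transversal $K_4$ of $T_4(n)$, because $D$ is $K_4$-free. The fractional double count of Lemma~\ref{lem:k4-cover} then says at least $ab$ edges are needed, which beats the available budget with room to spare: $12>10$, $16>13$, $16>14$, $20>16$, $25>18$.

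You instead import Brouwer's stability theorem (valid for $n\ge7$) to make $D$ tripartite and at most one edge short of some complete tripartite $K_{a,b,c}$. You then show that every part of $T_4(n)$ lies inside a part of $K_{a,b,c}$, and finish with a grouping check as in Proposition~\ref{prop:one-defect}. This does go through. The only tripartitions with at least $t_3(n)-1$ edges are $T_3(n)$ and, respectively, $(6,5,4)$, $(7,5,5)$, $(7,6,5)$, $(7,7,5)$, $(8,6,6)$. None of these, nor $T_3(n)$, is a coarsening of the $T_4(n)$ part sizes.

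What each route buys: yours gives structural information about $D$ and reuses the grouping idea. However, it rests on a nontrivial external theorem, and your fallback induction on a minimum-degree vertex is only a sketch. It also requires enumerating the near-balanced tripartitions. The paper's argument is self-contained and elementary, and its large slack makes it less fragile.

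One slip to fix: $T_4(15)$ has parts $4,4,4,3$ (as you yourself list), so ``part sizes at least $4$'' is false at $n=15$. The argument survives anyway. A part $P$ with $|P|\ge3$ that meets two parts of $K_{a,b,c}$ already contains at least $|P|-1\ge2$ cross pairs. That is more than the single missing edge can absorb.
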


\begin{proof}
If \(|D|\le t_3(n)-2\), then
\[
  \sigma(D,E)\le2(t_3(n)-2)+t_4(n),
\]
which saves four crossings.

Now suppose \(|D|=t_3(n)-1\).  If \(|E|\le t_4(n)-1\) or
\(|D\cap E|\le |D|-1\), then
\[
  \sigma(D,E)\le2t_3(n)+t_4(n)-3,
\]
which is enough.  The only dangerous subcase is
\[
  |E|=t_4(n),
  \qquad
  D\subseteq E.
\]
Then \(E=T_4(n)\), and one must delete exactly
\[
  t_4(n)-(t_3(n)-1)=(t_4(n)-t_3(n))+1
\]
edges from \(T_4(n)\) while destroying all transversal \(K_4\)'s.  Lemma
\ref{lem:k4-cover} rules this out in all required cases:
\[
\begin{array}{@{}c c c c@{}}
\toprule
n&T_4(n)\text{ parts}&(t_4-t_3)+1&\text{cover lower bound}\\
\midrule
15&4,4,4,3&10&12\\
17&5,4,4,4&13&16\\
18&5,5,4,4&14&16\\
19&5,5,5,4&16&20\\
20&5,5,5,5&18&25\\
\bottomrule
\end{array}
\]

It remains to take \(D=T_3(n)\).  With \(\delta=t_4(n)-t_3(n)\), saving three
crossings is equivalent, by \eqref{eq:sigma-tripartite}, to
\[
  x-2m\le\delta-3.
\]
If \(m\ge3\), then \(|E|\le t_4(n)\) gives
\(x-2m\le\delta-m\le\delta-3\).  For \(m=0,1,2\), Lemma
\ref{lem:internal-bounds} gives:
\[
\begin{array}{@{}c c c c c c@{}}
\toprule
n&(p,q,r)&\delta&m=0&m=1&m=2\\
\midrule
15&(5,5,5)&9&6&8&10\\
17&(6,6,5)&12&9&10&13\\
18&(6,6,6)&13&9&10&13\\
19&(7,6,6)&15&12&12&15\\
20&(7,7,6)&17&12&12&16\\
\bottomrule
\end{array}
\]
Every row satisfies \(x-2m\le\delta-3\).  The lower construction matches the
resulting upper bound for \(n=15,17,19\), and leaves one crossing open for
\(n=18,20\).
\end{proof}

\section{Numerical table}

Combining Paulsen's exact cases \(n=5,6,7,10\) with Propositions
\ref{prop:one-defect}, \ref{prop:two-defect}, and \ref{prop:three-defect}
gives the following table.  The final column is the consequence of this note,
not the current OEIS status.

\begin{table}[htbp]
\centering
\begin{tabular}{@{}c r r r l@{}}
\toprule
\(n\)&construction&refined upper&Paulsen upper&conclusion\\
\midrule
0&1&1&1&\(a_{\Lop}(0)=1\)\\
1&2&2&2&\(a_{\Lop}(1)=2\)\\
2&10&10&10&\(a_{\Lop}(2)=10\)\\
3&25&25&25&\(a_{\Lop}(3)=25\)\\
4&45&45&45&\(a_{\Lop}(4)=45\)\\
5&71&71&71&\(a_{\Lop}(5)=71\)\\
6&104&104&104&\(a_{\Lop}(6)=104\)\\
7&142&142&142&\(a_{\Lop}(7)=142\)\\
8&186&186&187&\(a_{\Lop}(8)=186\)\\
9&237&237&238&\(a_{\Lop}(9)=237\)\\
10&294&294&294&\(a_{\Lop}(10)=294\)\\
11&356&356&357&\(a_{\Lop}(11)=356\)\\
12&425&425&427&\(a_{\Lop}(12)=425\)\\
13&500&500&501&\(a_{\Lop}(13)=500\)\\
14&580&580&582&\(a_{\Lop}(14)=580\)\\
15&667&667&670&\(a_{\Lop}(15)=667\)\\
16&761&761&763&\(a_{\Lop}(16)=761\)\\
17&859&859&862&\(a_{\Lop}(17)=859\)\\
18&964&965&968&\(964\le a_{\Lop}(18)\le965\)\\
19&1076&1076&1079&\(a_{\Lop}(19)=1076\)\\
20&1193&1194&1197&\(1193\le a_{\Lop}(20)\le1194\)\\
\bottomrule
\end{tabular}
\caption{Refined bounds through \(n=20\).  Paulsen's upper bound is
\(U_P(n)+n+1\).}
\label{tab:final-bounds}
\end{table}

\begin{remark}[what remains at \(n=18\) and \(n=20\)]
For \(n=18\) and \(n=20\), the lower construction is still one crossing below
the refined upper bound.  Proving exactness in either case requires saving one
more unit in \(\sigma(D,E)\).  In particular, for \(n=18\), the case
\(D=T_3(18)\) with parts \((6,6,6)\) is not the apparent obstruction: an
exhaustive check over the missing cross-edge types gives \(x\le15\) when
\(m=3\), which would already save four crossings in that subcase.  The
remaining difficulty lies in near-extremal cases with \(|D|=t_3(18)-1\), or
analogous near-extremal cases at \(n=20\).
\end{remark}

\section*{Acknowledgments}

We thank David O. H. Cutler, Jonas Karlsson, Neil J. A. Sloane, and Matthias
Paulsen for the lollipop problem and for the geometric ideas on which this
note builds.  OpenAI's ChatGPT was used during exploratory algebra checking,
finite-case audit organization, and drafting assistance.

\clearpage
\appendix

\section{Paulsen's forced intriguing pair}\label{app:paulsen-proof}

We recall Paulsen's proof that among five circles, some pair is intriguing.
Suppose, toward a contradiction, that five circles with centers
\(x_i\in\mathbb R^2\) and radii \(r_i>0\) are pairwise non-intriguing.  Thus
every pair intersects at an obtuse angle, equivalently
\begin{equation}\label{eq:obtuse-circle-condition}
  r_i^2+r_j^2 < |x_i-x_j|^2 < (r_i+r_j)^2
  \qquad (i\ne j).
\end{equation}
For each circle set
\[
  v_i=\frac1{r_i}\bigl(1,\, r_i^2-|x_i|^2,\, x_i\bigr)\in\mathbb R^4,
\]
and write vectors in \(\mathbb R^4\) as triples \((\alpha,\beta,x)\), with
\(x\in\mathbb R^2\).  Endow \(\mathbb R^4\) with the symmetric bilinear form
\[
  \langle(\alpha,\beta,x),(\alpha',\beta',x')\rangle
  =\frac12\alpha\beta'+\frac12\beta\alpha'+x\cdot x'.
\]
A direct computation gives
\[
  \langle v_i,v_i\rangle=1,
  \qquad
  -1<\langle v_i,v_j\rangle<0\quad(i\ne j).
\]
Since five vectors in \(\mathbb R^4\) are linearly dependent, choose a nonzero
linear relation and move the negative terms to the other side:
\[
  \sum_{i\in P} a_i v_i = \sum_{j\in N} b_j v_j,
  \qquad
  a_i,b_j>0,
\]
where \(P\) and \(N\) are nonempty disjoint sets.  Replacing the relation by
its negative if necessary, assume \(|P|\le2\).  Let
\(w=\sum_{i\in P}v_i\).  Then
\[
  \left\langle \sum_{i\in P}a_i v_i,w\right\rangle>0,
\]
because \(|P|=1\) is immediate and, if \(P=\{1,2\}\), the expression is
\((a_1+a_2)(1+\langle v_1,v_2\rangle)>0\).  On the other hand,
\[
  \left\langle \sum_{j\in N}b_j v_j,w\right\rangle<0,
\]
since every cross inner product \(\langle v_j,v_i\rangle\), with
\(j\in N\) and \(i\in P\), is negative.  This contradicts the linear relation.
Therefore five pairwise non-intriguing circles cannot exist.

\section{The lower construction}\label{app:lower-table}

The standard lower construction blows up the optimal four-lollipop crossing
table
\[
\begin{array}{c|cccc}
 &1&2&3&4\\
\hline
1&*&5&7&7\\
2&&*&7&7\\
3&&&*&7\\
4&&&&*
\end{array}
\]
by replacing lollipop \(i\) with \(m_i\) nearby perturbed copies.  Crossings
between different clusters are inherited from the table, while pairs inside
one cluster contribute \(4\) crossings.  Hence
\begin{align*}
  L(m_1,m_2,m_3,m_4)
  ={}&5m_1m_2
  +7m_1m_3+7m_1m_4+7m_2m_3+7m_2m_4+7m_3m_4\\
  &+4\sum_{i=1}^4\binom{m_i}{2}.
\end{align*}
The lower crossing bound is
\[
  L(n)=\max_{m_1+m_2+m_3+m_4=n}L(m_1,m_2,m_3,m_4),
\]
and the corresponding lower region bound is \(L(n)+n+1\).

\begin{center}
\begin{tabular}{@{}c c r r@{}}
\toprule
\(n\)&one optimizing \((m_1,m_2,m_3,m_4)\)&\(L(n)\)&\(L(n)+n+1\)\\
\midrule
4&(1,1,1,1)&40&45\\
5&(1,1,1,2)&65&71\\
6&(1,1,2,2)&97&104\\
7&(1,2,2,2)&134&142\\
8&(1,2,2,3)&177&186\\
9&(1,2,3,3)&227&237\\
10&(2,2,3,3)&283&294\\
11&(2,2,3,4)&344&356\\
12&(2,2,4,4)&412&425\\
13&(2,3,4,4)&486&500\\
14&(2,3,4,5)&565&580\\
15&(2,3,5,5)&651&667\\
16&(3,3,5,5)&744&761\\
17&(3,3,5,6)&841&859\\
18&(3,3,6,6)&945&964\\
19&(3,4,6,6)&1056&1076\\
20&(4,4,6,6)&1172&1193\\
\bottomrule
\end{tabular}
\end{center}

\end{document}